\newcommand\N{\mathbb{N}}
\newcommand\E{\mathbb{E}}
\newcommand\R{\mathbb{R}}
\newcommand\cZ{\mathcal{Z}}
\newcommand\proba{\mathbb{P}}
\newcommand\e{\mathrm{e}}
\renewcommand\liminf{\underline{\lim}}
\renewcommand\limsup{\overline{\lim}}
\newcommand\ind{\mathds{1}}
\newcommand\eps{\varepsilon}
\newcommand{\vertiii}[1]{{\left\vert\kern-0.25ex\left\vert\kern-0.25ex\left\vert #1 
    \right\vert\kern-0.25ex\right\vert\kern-0.25ex\right\vert}}
\renewcommand{\leq}{\leqslant}
\renewcommand{\geq}{\geqslant}
\newtheorem{theorem}{Theorem}
\newtheorem{lemma}{Lemma}
\newtheorem{assumption}{Assumption}
\newtheorem{prop}{Proposition}
\newtheorem{remark}{Remark}
\newtheorem{definition}{Definition}
\begin{document}

\title{A subexponential version of Cramér's theorem}
\author{Grégoire Ferré\\
\small Capital Fund Management, 23-25 rue de l'Université, 75007 Paris
}

\date{\today}

\maketitle

\abstract{
  We consider the large deviations associated with the empirical mean of independent
  and identically distributed
  random variables under a subexponential moment condition. We show that non-trivial
  deviations are observable at a subexponential scale in the number of variables, and
  we provide the associated rate function, which is non-convex and is not derived from a
  Legendre--Fenchel transform. The proof adapts the one of Cramer's
  theorem to the case where the fluctuation is generated by a single variable.
  In particular, we develop a new tilting strategy for the lower bound, which leads
  us to introduce a condition on the second derivative of the moment generating
  function. Our results are illustrated by a couple of simple examples.
}

\section{Introduction}

In most cases, the  empirical mean of independent and identically distributed random
variables converges to the expectation of the variable, according to the law of large numbers.
The central limit theorem (CLT) describes small fluctuations around the mean,
 which are gaussian and scale in the square root of the number of observations. 
A spectacular feature of the CLT is its universality: all variables with the same variance
have the same gaussian small fluctuations. Moreover, convergence rates towards the CLT are available, 
for instance through Berry-Esseen type bounds~\cite{nourdin2012normal}.

It is often interesting to control fluctuations far away from the
CLT regime, for both theoretical and practical reasons. This is
the concern of \emph{large deviations theory}, which provides such asymptotic
control typically at exponential scale~\cite{deuschel2001large,dembo2010large, touchette2009large}.
In a standard situation where the random variables have some finite exponential moment,
probabilities of fluctuations are indeed exponentially small with the number of observations,
the rate of smallness being controlled by a function called \emph{rate function}, which
is in general quadratic around the mean. As a result, a large deviations principle generalizes the
strong law of large numbers (by Borel-Cantelli) and the CLT (by expanding the rate function
around the mean).
Contrarily to the CLT, the rate function is not universal and depends a priori on the entire
distribution of the variable.

However, the exponential fluctuation scaling does not
always hold true with a non-trivial rate function. Actually,
the empirical mean can be controlled at an exponential scale if and only if
the variable has some exponential moment~\cite{wang2010sanov}.
When the variable does not have any exponential moment, the situation is much
more complicated. Fluctuation theorems for subexponential variables
have been investigated by Nagaev~\cite{nagaev1969integral,nagaev1969integralii}
and Borovkov~\cite{borovkov2000large} before being
recently revisited~\cite{gantert2014large,brosset2020probabilistic}.
Although these works provide useful subexponential estimates,
they are not precise enough to provide full large deviations principles with amenable rate
function as we could expect from the modern theory~\cite{dembo2010large}, in particular
concerning the lower bound.

In this paper, we prove a full large deviations principle for a class of subexponential
variables. Contrarily to previous works on the topic (see~\cite{gantert2014large,brosset2020probabilistic}
and references therein), we do not assume any form for
the cumulative distribution of the random variable, but rather work with a scaled
version of the cumulant function that encompasses subexponentialness.
This allows in particular to consider cases where the distribution at hand is not known.
For proving
the upper bound, we rely on the very insightful work~\cite{gantert2014large} that
we adapt to our moment assumption. For the lower bound, we develop a new tilting strategy by
using a subexponential transform on one variable. Since only one variable is tilted,
we cannot use standard concentration estimates, and rather control the deviations of the tilted
variable through an assumption on
the second derivative  of the moment generating function, which strengthens the Gartner--Ellis
condition. Although the upper bound sheds some light on the large deviations mechanism,
the proof of the lower bound is the most instructive and original part of the paper.
As a side product,  it also provides the optimal sampling strategy
for reducing variance of a large deviations estimator.

The work is organized as follows. Section~\ref{sec:main} presents our assumptions
and the associated large deviations result.
Some examples of application are proposed in Section~\ref{sec:examples}, while
the proofs of lower and upper bounds are postponed to Section~\ref{sec:proof}.
Some perspectives are finally  discussed in Section~\ref{sec:discussion}.

\vspace{0.5cm}


\section{Large deviations at subexponential scale}
\label{sec:main}

We consider i.i.d. samples~$(X_i)_{i\in\N^*}$ of a random variable~$X$
with law~$\mu$ on~$\R$, satisfying the following assumption.

\begin{assumption}
  \label{as:basics}
  The random variable~$X$ is symmetric and has finite polynomial moments of any order.
\end{assumption}
An immediate consequence of this assumption is that~$\E[X]=0$.
These conditions are not restrictive for the problem we are considering,
but simplify the presentation of the
results. We associate the samples~$(X_i)_{i\in\N^*}$ the empirical mean
\begin{equation}
  \label{eq:Sn}
S_n = \frac{1}{n} \sum_{i=1}^n X_i.
\end{equation}

Our goal is to derive a large deviations principle for~$S_n$
under a subexponential moment condition on~$X$. For this, we introduce
the following scaling functions. 
\begin{definition}
  \label{def:scale}
  For $\alpha\in(0,1)$, we define a scaling function by
  \[
  \forall\, x\in\R,\quad \phi_\alpha(x) = \mathrm{sign}(x)|x|^\alpha.
  \]
\end{definition}
The scaling function~$\phi_\alpha$ is a natural tool to introduce
a subexponential moment condition through a scaled free
energy\footnote{We prefer the naming free energy to cumulant generating
  function because \emph{scaled cumulant generating function} refers
  to another concept related to the Gartner--Ellis
  theorem~\cite{dembo2010large}.}, which we define, for any $\alpha\in(0,1)$, by
\begin{equation}
  \label{eq:scaledFE}
  \forall\,\eta\in\R,\quad
  \lambda_\alpha(\eta) = \log \E\left[ \e^{\eta \phi_\alpha(X)}\right].
\end{equation}
Just like a standard free energy,~$\lambda_\alpha$ is a convex function from~$\R$
into~$(-\infty,+\infty]$ with domain
\begin{equation}
  \label{eq:domaindef}
D_{\lambda_\alpha} = \{ \eta\in\R,\, \lambda_\alpha(\eta)<+\infty\}.
\end{equation}
Since~$\lambda_\alpha$ is convex, its domain is convex, hence it is
a segment. Since~$X$ is symmetric so is~$\lambda_\alpha$. We thus
have~$D_{\lambda_\alpha} = (-\xi,\xi)$ or $D_{\lambda_\alpha} = [-\xi,\xi]$
for some~$\xi\in [0,+\infty]$. Moreover, we know that~$\lambda_\alpha$ is
(infinitely) differentiable on the interior of its domain by standard dominated
convergence results~\cite[Lemma~2.2.5]{dembo2010large}.
We now propose a generalization of the
essential smoothness condition of the Gartner--Ellis theorem
(see~\cite[Definition~2.3.5]{dembo2010large}).
\begin{assumption}[Second order essential smoothness]
  \label{as:essmooth}
  There exists~$\alpha\in(0,1)$ such that the function
  $\lambda_\alpha:\R\to (-\infty,+\infty]$ satisfies:
    \begin{itemize}
    \item Non-trivial bounded domain:
      $D_{\lambda_\alpha} = (-\xi,\xi)$ for some~$\xi\in (0,+\infty)$.
    \item Steepness: for any sequence~$(\eta_n)_{n\in\N}$ converging to~$\pm\xi$,
      it holds
      \begin{equation}
        \label{eq:lambdasteep}
        |\lambda_\alpha'(\eta_n)|\xrightarrow[n\to+\infty]{} +\infty.
      \end{equation}
    \item Bounded relative variance: define
      \begin{equation}
        \label{eq:relatvar}
        \mathcal{V}(\eta)= \frac{\lambda_\alpha''(\eta) }{\lambda_\alpha'(\eta)^2}.
      \end{equation}
      There exist~$\xi_0\in(0,\xi)$ and~$\omega\in (0,+\infty)$ such that~$\lambda_\alpha''$
        (resp.~$\mathcal{V}$)
        is non-decreasing (resp. non-increasing) on~$[\xi_0,\xi)$ and
          \begin{equation}
            \label{eq:Vsteep}
            \forall\, \eta\in(\xi_0,\xi), \quad
          \mathcal{V}(\eta)\leq \omega.
          \end{equation}
    \end{itemize}

\end{assumption}

We are now in position to state our main theorem. The definition of a large deviations
principle is recalled (with some technical details) in Appendix~\ref{sec:appendix}.
\begin{theorem}
  \label{th:main}
  Let Assumptions~\ref{as:basics} and~\ref{as:essmooth} hold. Then the empirical mean~$S_n$
  defined in~\eqref{eq:Sn} satisfies a large deviations principle at speed~$n^\alpha$
  with rate function~$I_\alpha$ defined by:
  \begin{equation}
    \label{eq:Ialpha}
    \forall\,x\in\R, \quad I_\alpha(x) = \xi |x|^\alpha.
  \end{equation}
\end{theorem}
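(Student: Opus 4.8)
The plan is to reduce the full principle to sharp estimates of the tail probabilities $\mathbb{P}(S_n\ge x)$ for $x>0$, and then recover the statement for arbitrary closed and open sets by soft arguments. Since $X$ is symmetric, $I_\alpha$ is even, and since $I_\alpha(x)=\xi|x|^\alpha$ is continuous, increasing in $|x|$ and coercive, it is a good rate function for which the principle follows from one-point estimates: for the upper bound, any closed $F$ with $\mathrm{dist}(0,F)=a>0$ satisfies $\mathbb{P}(S_n\in F)\le 2\,\mathbb{P}(S_n\ge a)$ while $\inf_F I_\alpha=\xi a^\alpha$; for the lower bound it suffices to bound $\mathbb{P}(S_n\in(x-\delta,x+\delta))$ from below for each $x$ and $\delta>0$. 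The guiding heuristic is the single big jump principle: under the subexponential condition encoded by $D_{\lambda_\alpha}=(-\xi,\xi)$, the cheapest way to realise $S_n\approx x$ is that one variable, say $X_1$, takes the value $\approx nx$ while the other $n-1$ stay typical; since $\mathbb{P}(X\ge t)\approx\e^{-\xi t^\alpha}$, this costs $\e^{-\xi(nx)^\alpha}=\e^{-\xi x^\alpha n^\alpha}$, exactly the announced rate at speed $n^\alpha$.

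For the upper bound I would follow the truncation scheme of~\cite{gantert2014large}, adapted to Assumption~\ref{as:essmooth}. The first ingredient is a single-variable tail estimate by a Chernoff bound in the $\phi_\alpha$ scale: for $\eta\in(0,\xi)$, $\mathbb{P}(X\ge t)=\mathbb{P}(\phi_\alpha(X)\ge t^\alpha)\le \e^{-\eta t^\alpha+\lambda_\alpha(\eta)}$, whence $\limsup_t t^{-\alpha}\log\mathbb{P}(X\ge t)\le-\eta$, and letting $\eta\uparrow\xi$ gives $\le-\xi$. The second ingredient exploits the subadditivity $|a+b|^\alpha\le|a|^\alpha+|b|^\alpha$ valid for $\alpha\in(0,1)$: distributing the required excess $nx$ over $k\ge2$ variables costs $k(v/k)^\alpha=k^{1-\alpha}v^\alpha>v^\alpha$, so concentrating it in one variable is strictly cheapest. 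Concretely I would split $\{S_n\ge x\}$ at a linear truncation level $\theta n$: on the event that all variables lie below $\theta n$ the truncated increments are bounded and a classical exponential Chebyshev bound makes the deviation decay like $\e^{-cn}$, negligible at speed $n^\alpha$; the complementary event is handled by a union bound reducing it to the single-variable estimate, yielding the rate $-\xi(x-\eps)^\alpha$. Sending the truncation and auxiliary parameters to their limits gives $\limsup_n n^{-\alpha}\log\mathbb{P}(S_n\ge x)\le-\xi x^\alpha$.

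The lower bound is the delicate part, where I would implement the one-variable tilting announced in the introduction. Fix $x>0$ and tilt only $X_1$ by the subexponential transform $\d\tilde\mu_\eta=\e^{\eta\phi_\alpha(y)-\lambda_\alpha(\eta)}\d\mu$, leaving $X_2,\dots,X_n$ untouched; the change of measure formula gives
\[
  \mathbb{P}(S_n\in A)=\tilde\E\!\left[\e^{-\eta\phi_\alpha(X_1)+\lambda_\alpha(\eta)}\,\ind_{S_n\in A}\right].
\]
Choose $\eta=\eta_n\uparrow\xi$ with $\lambda_\alpha'(\eta_n)=n^\alpha x^\alpha$, possible by steepness; then under $\tilde\mu_{\eta_n}$ the mean of $\phi_\alpha(X_1)$ is $n^\alpha x^\alpha$, so $X_1\approx nx$ and, by the weak law for the untilted mean-zero remainder, $S_n\to x$. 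Restricting to the event where $\phi_\alpha(X_1)$ lies in a window around $n^\alpha x^\alpha$ and the remaining empirical mean is small, the density factor equals $\e^{-\lambda_\alpha^\star(n^\alpha x^\alpha)}$ with $\lambda_\alpha^\star$ the Legendre transform. It remains to show $n^{-\alpha}\lambda_\alpha^\star(n^\alpha x^\alpha)\to\xi x^\alpha$: since the bounded relative variance gives the differential inequality $\tfrac{\d}{\d\eta}\!\left(-1/\lambda_\alpha'\right)=\mathcal{V}\le\omega$, the function $\lambda_\alpha$ blows up only logarithmically at $\xi$, so $\lambda_\alpha(\eta_n)=O(\log n)=o(n^\alpha)$, and $\lambda_\alpha^\star(n^\alpha x^\alpha)=\eta_n n^\alpha x^\alpha-\lambda_\alpha(\eta_n)\sim\xi x^\alpha n^\alpha$.

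The crux — and the step I expect to be the main obstacle — is to show that the tilted probability $\tilde{\mathbb{P}}(S_n\in A)$ does not decay at speed $n^\alpha$, i.e. is $\e^{-o(n^\alpha)}$. Because only one variable is tilted, $S_n$ does not concentrate: $\phi_\alpha(X_1)/n^\alpha$ keeps $O(1)$ fluctuations, so the usual large-sum concentration is unavailable and one must instead bound the dispersion of the single tilted variable. This is exactly the role of the bounded relative variance~\eqref{eq:relatvar}: since $\tilde{\mathrm{Var}}(\phi_\alpha(X_1))=\lambda_\alpha''(\eta_n)=\mathcal{V}(\eta_n)\lambda_\alpha'(\eta_n)^2\le\omega\,(n^\alpha x^\alpha)^2$, a Chebyshev estimate keeps $\phi_\alpha(X_1)$ within a fixed multiplicative window of its mean with probability bounded away from $0$; the monotonicity of $\lambda_\alpha''$ and $\mathcal{V}$ on $[\xi_0,\xi)$ is what allows one to upgrade this to placing $X_1/n$ inside $(x-\delta,x+\delta)$ with non-vanishing probability, while a standard weak law controls $\tfrac1n\sum_{i\ge2}X_i\to0$. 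Combining the density factor with this probability bound and letting the window shrink yields $\liminf_n n^{-\alpha}\log\mathbb{P}(S_n\in(x-\delta,x+\delta))\ge-\xi x^\alpha$, which closes the lower bound and, together with the upper bound, proves the theorem.
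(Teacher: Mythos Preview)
Your lower bound outline follows the paper's strategy closely: separate $X_1$ from the rest, tilt only $X_1$ via the subexponential transform $\tilde\mu_\eta$, choose $\eta_n=(\lambda_\alpha')^{-1}((nx)^\alpha)$ by steepness, and control the concentration of $\phi_\alpha(\tilde X_{\eta_n})$ around its mean $(nx)^\alpha$. However, your concentration argument is too optimistic. A plain Chebyshev estimate gives
\[
\tilde{\proba}\big(|\phi_\alpha(X_1)-(nx)^\alpha|>(n\gamma)^\alpha\big)\le (x/\gamma)^{2\alpha}\,\mathcal{V}(\eta_n),
\]
which is useless for a small window $\gamma$ unless $\mathcal{V}(\eta_n)\to 0$, and Assumption~\ref{as:essmooth} only gives $\mathcal{V}\le\omega$. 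The paper notes exactly this in a footnote and instead applies an \emph{exponential} Chebyshev inequality, a second-order Taylor expansion of $\lambda_\alpha$ between $\eta_n$ and a nearby $\tilde\eta_n=(\lambda_\alpha')^{-1}((n\tilde x_+)^\alpha)$, and the convex-duality relation $J_\alpha''=1/(\lambda_\alpha''\circ(\lambda_\alpha')^{-1})$; the monotonicity of $\lambda_\alpha''$ and $\mathcal{V}$ then turns this into a uniform bound $\tilde{\proba}(\cdot)\ge c_{x,\omega}>0$. Your sentence about monotonicity ``upgrading'' the estimate points in the right direction but leaves the real work undone. (Also, the detour through $\lambda_\alpha(\eta_n)=O(\log n)$ is unnecessary: the paper simply uses $\lambda_\alpha\ge 0$ by symmetry to drop that term from the exponent.)

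The upper bound contains a genuine gap. You assert that, on the event that all variables lie below $\theta n$, ``a classical exponential Chebyshev bound makes the deviation decay like $\e^{-cn}$''. This is false: since $X$ has no exponential moment, $\E[\e^{\beta X}\ind_{X<\theta n}]\to+\infty$ as $n\to\infty$ for every fixed $\beta>0$, so a Chernoff bound with a constant tilt is vacuous. In the paper the truncated term $A_n^2$ does \emph{not} decay at speed $n$; it decays at speed $n^\alpha$ with the \emph{same} rate $\xi x^\alpha$ as the max term $A_n^1$. To obtain this the paper applies Chebyshev with a vanishing tilt $\beta_n/n=\theta n^{\alpha-1}$, giving
\[
\frac{1}{n^\alpha}\log A_n^2\le -\theta x + n^{1-\alpha}\log\E\big[\ind_{\{X<nx\}}\,\e^{\theta n^{\alpha-1}X}\big],
\]
and the substantial part of the argument (Lemma~\ref{lem:boundremainder}) is showing that the remainder is asymptotically $\le 0$ for any $\theta<\xi x^{\alpha-1}$; this requires a Taylor expansion of the exponential, H\"older's inequality to separate polynomial and exponential moments, and the integration-by-parts Lemma~\ref{lem:IBP}. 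Your subadditivity heuristic correctly identifies the single-big-jump mechanism, but it does not yield the rate-$n$ decay you claim for the truncated piece.
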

The proof of Theorem~\ref{th:main} is postponed to Section~\ref{sec:proof}. We propose
some remarks on this result before presenting in Section~\ref{sec:examples}
a couple of situations where it applies.

\begin{remark}
  \begin{itemize}
\item Although, for i.i.d. variables,~$\lambda_\alpha$ is differentiable on the interior
  of its domain (so we do not need to make a smoothness assumption on~$\lambda_\alpha$),
  there exist non-steep such free energies~\cite[exercize 2.3.17]{dembo2010large}.
\item One may be confused by the reference to Cramér's theorem, whereas we use a generalization
  of the Gartner--Ellis steepness condition. We refer here to the range of applications of Theorem~\ref{th:main},
  which concerns independent variables, rather than the assumption. Moreover, the steepness
  condition is not used for deriving the lower bound from the upper bound through convex
  analysis arguments, but to perform an arbitrarily large exponential tilting on one variable, which
  is quite different.
\item If~$X$ is a variable with density~$p$ whose tail scales like~$\e^{-\xi_\alpha |x|^\alpha}$
  at infinity, then the scaled variable $Y=\phi_\alpha(X)$ mostly scales
  like~$\e^{-\xi_\alpha |y|}$ at infinity, up to slowly varying functions at exponential
  scale, like in~\cite{gantert2014large}. One can thus understand
  Assumption~\ref{as:essmooth} as a way to find
  the right scaling to bring the subexponential tail back to an exponential one (or in other words
  to find the correct subexponential decay scale). The speed of the LDP is given by the exponent~$\alpha$
  while the rate function is fully determined by the tail factor~$\xi_\alpha$.
\item One can consider the scaled random variable $Y=\phi_\alpha(X)$.
  Since this variable has an exponential moment, its empirical mean satisfies a LDP with a good rate
  function defined by the Fenchel transform
  \[
  J_\alpha(x) = \sup_\eta\, \{\eta x - \lambda_\alpha(\eta)\}.
  \]
  We can check by convex analysis that actually
  \[
  \xi = \lim_{n\to +\infty}\ \frac{J_\alpha(n)}{n},
  \]
  and thus, $\forall\, x \in\R$,
  \[
  I_\alpha(x) = \lim_{n\to +\infty}\ \frac{J_\alpha(n^\alpha|x|^\alpha)}{n^\alpha}.
  \]
  One could actually expect the subexponential rate function~$I_\alpha$ to be equal
  to~$J_\alpha(|\,\cdot\,|^\alpha)$ because~$Y$ is equal to~$X$ transported
  by the mapping~$\phi_\alpha$. However, the fact that the fluctuation is most typically
  generated by one variable makes only the tail of~$J_\alpha$ asymptotically visible.
  This is another interpretation of the coefficient~$\xi$ in~\eqref{eq:Ialpha}.
\item In general,~\eqref{eq:Vsteep} is not a necessary condition for Theorem~\ref{th:main}
  to hold. From the proof of the lower bound, a closer-to-optimal condition
  might be:
  \[
    \forall\,x>0,\quad
    \mathcal{V}(\eta_n^x)\underset{n\to \infty}{=}\mathrm{o}(n^\alpha),\quad
    \mathrm{with} \quad \eta_n^x = (\lambda_\alpha')^{-1}\big((nx)^\alpha\big).
  \]
  However it does not seem necessary
  to reach such a precision in the cases we are interested in.
\item The monotonicity assumptions on~$\lambda_\alpha''$ and~$\mathcal{V}$ are set for
  pure convenience in order to simplify the last step of the proof of the lower bound. The important
  part of the assumption is the boundedness of~$\mathcal{V}$.
\item We assume that~$\xi\in(0,+\infty)$ for simplicity but Theorem~\ref{th:main}
  also holds when $\xi =0$ or $\xi = +\infty$. In such a situation
  the LDP at scale~$n^\alpha$ is trivial. We therefore avoid distinguishing cases and focus on the non-trivial situation.
  In particular, Theorem~\ref{th:main} is consistent with the standard Cramér's
  theorem at exponential scale~\cite[Theorem~2.2.3]{dembo2010large},
  since only one LDP scaling provides a non-trivial result.
  \end{itemize}
\end{remark}

\section{Simple applications}
\label{sec:examples}
Before diving into the proof, we propose a couple of illustrative applications that 
actually motivated this study. The baseline is to consider a simple random variable and
to raise it to some power~$p>0$. In general, for~$p$ small enough, a standard large deviations
principle holds, while for~$p$ large we can use Theorem~\ref{th:main}, which uncovers
a phase transition. The physical idea behing these examples is to observe of function
of interest over a simple system.

Let us start with the exponential case.
Let~$Y$ be a two-sided exponential random variable with distribution on~$\R$
given by
\begin{equation}
  \label{eq:twosidedexpo}
\nu(dy) = \frac{\e^{-|y|}}{2}dy,
\end{equation}
and consider $X = \phi_p(Y)$ for~$p\geq 0$. In this case, the large deviations of~$X$
for $p\in(0,1]$ are covered by Cramer's theorem. This result however fails to provide a
useful information for $p>1$, since then~$X$
does not have any exponential moment any more. Theorem~\ref{th:main} allows to get the full
picture on this situation.

\begin{prop}[Powers of exponential variables]
  \label{prop:expo}
  For~$p\in(0,1]$, the empirical mean of~$X$ satisfies a large deviations principle at speed~$n$
  with rate function given by
  \[
  J(x) = \sup_\eta\, \{ \eta x - \lambda(\eta)\} \quad \mathrm{where}\quad
  \lambda(\eta) = \log\E\left[\e^{\eta X}\right],
  \]
  while for $p>1$ a large deviations principle holds at speed~$n^{1/p}$ with rate function
  \[
  I_{1/p}(x) = |x|^{1/p}.
  \]
\end{prop}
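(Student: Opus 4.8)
The plan is to treat the two regimes separately. For $p\in(0,1]$ the result reduces to classical Cramér theory, so I would first check that $X=\phi_p(Y)$ still possesses an exponential moment. Since $|X|=|Y|^p$, the tail satisfies $\proba(|X|>t)=\proba(|Y|>t^{1/p})=\e^{-t^{1/p}}$, and because $1/p\ge 1$ this decays at least exponentially; hence $\lambda(\eta)=\log\E[\e^{\eta X}]$ is finite in a neighbourhood of the origin. Cramér's theorem~\cite[Theorem~2.2.3]{dembo2010large} then applies directly to the i.i.d.\ sequence $(X_i)$ and yields the LDP at speed $n$ with rate function the Legendre--Fenchel transform $J$ of $\lambda$. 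This part involves no real difficulty.

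The substance lies in the case $p>1$, where I set $\alpha=1/p\in(0,1)$ and aim to apply Theorem~\ref{th:main}. The key observation is the algebraic identity $\phi_{1/p}\circ\phi_p=\id$: for every $y\in\R$ one has $\phi_{1/p}(\phi_p(y))=\mathrm{sign}(y)\,(|y|^p)^{1/p}=y$, so that the scaled variable is simply $\phi_\alpha(X)=Y$. Consequently the scaled free energy of~\eqref{eq:scaledFE} collapses to the ordinary free energy of the two-sided exponential, which is explicit:
\[
\lambda_\alpha(\eta)=\log\E\!\left[\e^{\eta Y}\right]=-\log(1-\eta^2)
\quad\text{for } |\eta|<1,
\]
and $+\infty$ otherwise. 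It then remains to verify Assumptions~\ref{as:basics} and~\ref{as:essmooth}.

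Assumption~\ref{as:basics} is immediate, since $Y$ is symmetric so $X=\phi_p(Y)$ is symmetric, and $\E[|X|^k]=\E[|Y|^{pk}]<+\infty$ for every $k$. For Assumption~\ref{as:essmooth} I read off from the explicit formula that $D_{\lambda_\alpha}=(-1,1)$, so $\xi=1$ (bounded, non-trivial domain). Differentiating gives $\lambda_\alpha'(\eta)=2\eta/(1-\eta^2)$, which diverges as $\eta\to\pm 1$, yielding steepness. For the relative variance I compute
\[
\lambda_\alpha''(\eta)=\frac{2(1+\eta^2)}{(1-\eta^2)^2},
\qquad
\mathcal{V}(\eta)=\frac{\lambda_\alpha''(\eta)}{\lambda_\alpha'(\eta)^2}=\frac{1+\eta^2}{2\eta^2},
\]
so that $\lambda_\alpha''$ is non-decreasing and $\mathcal{V}$ is non-increasing on $(0,1)$, with $\mathcal{V}(\eta)\to 1$ as $\eta\to 1$; hence $\mathcal{V}$ is bounded on $(\xi_0,1)$ for any $\xi_0\in(0,1)$, say by $\omega=\mathcal{V}(\xi_0)$. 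All three clauses then hold with $\alpha=1/p$ and $\xi=1$, and Theorem~\ref{th:main} delivers the LDP at speed $n^{1/p}$ with rate function $I_{1/p}(x)=\xi|x|^{1/p}=|x|^{1/p}$.

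I expect no genuine analytical obstacle: the whole force of the example is that the identity $\phi_{1/p}\circ\phi_p=\id$ makes $\lambda_\alpha$ exactly solvable, reducing the verification of Assumption~\ref{as:essmooth} to elementary calculus. The only point demanding a little care is the bounded-relative-variance clause, where one must confirm the monotonicity of $\lambda_\alpha''$ and $\mathcal{V}$ together with the finiteness of the boundary limit $\mathcal{V}(1^-)=1$ --- precisely the property guaranteeing that the single-variable tilting used in the lower bound of Theorem~\ref{th:main} does not degenerate.
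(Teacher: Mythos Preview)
Your proposal is correct and follows essentially the same approach as the paper: for $p\le 1$ you invoke Cramér's theorem after checking exponential moments, and for $p>1$ you set $\alpha=1/p$, use the identity $\phi_\alpha(X)=Y$ to compute $\lambda_\alpha(\eta)=-\log(1-\eta^2)$ explicitly, and verify the clauses of Assumption~\ref{as:essmooth} via the same derivative computations. The only cosmetic difference is that you take $\omega=\mathcal{V}(\xi_0)$ whereas the paper takes $\omega=1+\eps$; both choices are valid since $\mathcal{V}$ is decreasing with limit~$1$.
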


A similar result can easily be obtained for powers of symmetrized Gamma random variables.
We consider instead $Z=\phi_p(G)$ where~$G$ is a standard Gaussian
random variable for $p>0$.

\begin{prop}[Powers of Gaussian variables]
  \label{prop:gaussian}
  For any~$p\in(0,2]$, the empirical mean of~$Z$ satisfies a large deviations principle at speed~$n$
  with rate function given by
  \[
    J(x) = \sup_\eta\, \{ \eta x - \lambda(\eta)\} \quad \mathrm{where}\quad
  \lambda(\eta) = \log\E\left[\e^{\eta Z}\right],
  \]
  while for $p>2$, a large deviations principle holds at speed~$n^{2/p}$ with rate function
  \[
  I_{2/p}(x) = \frac{|x|^{2/p}}{2}.
  \]
\end{prop}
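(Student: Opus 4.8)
The plan is to treat the two regimes separately, using standard Cramér's theorem for $p\in(0,2]$ and our Theorem~\ref{th:main} for $p>2$, the bridge between them being the elementary identity $\phi_\alpha\circ\phi_p=\phi_{\alpha p}$ (which follows at once from $\mathrm{sign}(\phi_p(x))=\mathrm{sign}(x)$ and $|\phi_p(x)|^\alpha=|x|^{\alpha p}$).

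For $p\in(0,2]$, I would first observe that $Z=\phi_p(G)$ has positive tail $\proba(Z>t)=\proba(G>t^{1/p})$, which decays like $\e^{-t^{2/p}/2}$. Since $2/p\geq 1$, the origin lies in the interior of the domain of $\lambda(\eta)=\log\E[\e^{\eta Z}]$, this domain being all of $\R$ when $p<2$ and $(-1/2,1/2)$ when $p=2$. Standard Cramér's theorem in $\R$ \cite[Theorem~2.2.3]{dembo2010large} then directly yields the LDP at speed $n$ with rate function the Legendre--Fenchel transform $\lambda^\star$, which is precisely the claimed $J$. Note that the boundary exponent $\alpha=2/p=1$ is excluded from Theorem~\ref{th:main}, consistently with the fact that $Z$ still has exponential moments at $p=2$.

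For $p>2$, I would set $\alpha=2/p\in(0,1)$ and apply Theorem~\ref{th:main} to $X=Z$. Assumption~\ref{as:basics} is immediate: $Z$ is symmetric since $G$ is and $\phi_p$ is odd, and all polynomial moments are finite because $\E[|Z|^k]=\E[|G|^{pk}]<+\infty$. The key simplification is that $\phi_\alpha(Z)=\phi_\alpha(\phi_p(G))=\phi_{\alpha p}(G)=\phi_2(G)=\mathrm{sign}(G)G^2$, so the scaled free energy becomes explicit: a direct Gaussian integration gives, for $\eta\in(-1/2,1/2)$,
\[
\lambda_\alpha(\eta)=\log\Big(\tfrac12\big[(1-2\eta)^{-1/2}+(1+2\eta)^{-1/2}\big]\Big),
\]
and $\lambda_\alpha(\eta)=+\infty$ otherwise, so that $D_{\lambda_\alpha}=(-\xi,\xi)$ with $\xi=1/2$.

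It remains to verify the second order essential smoothness of Assumption~\ref{as:essmooth}, which is where the only real work lies. Writing $u=1-2\eta\to 0^+$, the term $(1-2\eta)^{-1/2}$ dominates near the boundary, and differentiating the explicit $\lambda_\alpha$ yields $\lambda_\alpha'(\eta)\sim (1-2\eta)^{-1}$ and $\lambda_\alpha''(\eta)\sim 2(1-2\eta)^{-2}$ as $\eta\to 1/2$. Steepness~\eqref{eq:lambdasteep} follows from $\lambda_\alpha'\to+\infty$, and the relative variance satisfies $\mathcal{V}(\eta)=\lambda_\alpha''/(\lambda_\alpha')^2\to 2$, hence is bounded near $\xi$; the required monotonicity of $\lambda_\alpha''$ and $\mathcal{V}$ on some $[\xi_0,1/2)$ is then read off from these expressions. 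Theorem~\ref{th:main} then delivers an LDP at speed $n^\alpha=n^{2/p}$ with rate function $I_\alpha(x)=\xi|x|^\alpha=\tfrac12|x|^{2/p}$, as claimed. The main (though modest) obstacle is the boundary asymptotics of $\lambda_\alpha$ and its first two derivatives: everything hinges on the boundedness of the ratio $\mathcal{V}$, and the argument would fail were this relative variance to blow up.
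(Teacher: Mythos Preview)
Your proposal is correct and follows essentially the same approach as the paper: Cramér's theorem for $p\leq 2$, and for $p>2$ the identification $\phi_\alpha(Z)=\mathrm{sign}(G)G^2$ with $\alpha=2/p$, the explicit computation of $\lambda_\alpha$ on $(-1/2,1/2)$, and verification of Assumption~\ref{as:essmooth} via the boundary asymptotics $\lambda_\alpha'(\eta)\sim(1-2\eta)^{-1}$, $\lambda_\alpha''(\eta)\sim 2(1-2\eta)^{-2}$, $\mathcal{V}(\eta)\to 2$. The only cosmetic difference is that the paper writes out $f$, $f'$, $f''$ explicitly rather than extracting the leading-order behaviour directly, but the content is identical.
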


\begin{proof}[Proof of Propositions~\ref{prop:expo} and~\ref{prop:gaussian}]
  When~$X=\phi_p(Y)$ with~$Y$ defined by~\eqref{eq:twosidedexpo}, the variable~$X$ has
  exponential moments for any $p\leq 1$, so we focus on~$p>1$. In this case,
  setting~$\alpha = 1/p<1$, a simple computation shows that
  \[
\lambda_\alpha(\eta) = \log \E[\e^{\eta \phi_\alpha(X)}]= \log \E[\e^{\eta Y}] = - \log( 1 - \eta^2).
\]
We can then check the criteria of Assumption~\ref{as:essmooth} by first noting
that $D_{\lambda_\alpha} = (-1,1)$ and
\[
\forall\,\eta \in D_{\lambda_\alpha},\quad
\lambda_\alpha'(\eta) = \frac{2\eta}{1 - \eta^2},\quad
\lambda_\alpha''(\eta) = 2\frac{(1+\eta^2)}{(1-\eta^2)^2}.
\]
As a results,~\eqref{eq:lambdasteep} holds and
\[
\mathcal{V}(\eta) = \frac{1+\eta^2}{2\eta^2}\xrightarrow[\eta\to 1]{} 1,
\]
so that~\eqref{eq:Vsteep} is satisfied with $\omega=1+\eps$ for any $\eps>0$.
Since it is clear that~$\lambda_\alpha''$ is
increasing (and~$\mathcal{V}$ decreasing), all the conditions of Assumption~\ref{as:essmooth}
are satisfied and Theorem~\ref{th:main} applies.

Now, when $Z=\phi_p(G)$, the variable~$Z$ has exponential moments for $p\leq 2$, we thus consider the case
$p >2$. In this case we set $\alpha = 2/p<1$ and we can show that
\[
\lambda_\alpha(\eta) = \log \E[\e^{\eta \phi_\alpha(Z)}] =  \log \E[\e^{\eta \mathrm{sign}(G) G^2}] =
\log( \frac{1}{\sqrt{1 + 2\eta}} + \frac{1}{\sqrt{1 - 2\eta}}  ) - \log\left(
\frac{2}{\sqrt{2}}\right).
\]
We thus have $D_{\lambda_\alpha}= (-1/2,1/2)$. Introducing
\[
f(\eta)=(1 + 2\eta)^{-1/2} + (1 - 2\eta)^{-1/2},
\]
we compute
\[
f'(\eta) = -(1 + 2\eta)^{-3/2} + (1 - 2\eta)^{-3/2},\quad f''(\eta) =
3(1 + 2\eta)^{-5/2} + 3(1 - 2\eta)^{-5/2}.
\]
As a result, we get
\[
\lambda_\alpha'(\eta) = \frac{f'(\eta)}{f(\eta)}\underset{\eta\to 1/2}{\sim}
(1 - 2\eta)^{-1}\xrightarrow[\eta\to 1/2]{} +\infty,
\]
and a symmetric conclusion holds for $\eta\to - 1/2$, so~\eqref{eq:lambdasteep} holds.
In a similar fashion,
\[
\lambda_\alpha''(\eta) = \frac{f(\eta)f''(\eta) - f'(\eta)^2  }{f(\eta)^2},
\]
so
\[
\mathcal{V}(\eta) = \frac{f(\eta)f''(\eta)  }{f'(\eta)^2} - 1
\underset{\eta\to 1/2}{\sim} 3 \frac{(1 - 2\eta)^{-5/2}(1 - 2\eta)^{-1/2}}{(1 - 2\eta)^{-6/2}}
-1\xrightarrow[\eta\to 1/2]{} 2.
\]
This entails that~\eqref{eq:Vsteep} again holds with $\omega=2+\eps$ for any $\eps>0$.
The monotonicity conditions on~$\lambda_\alpha''$ and~$\mathcal{V}$ are also easily checked so
all the conditions of Assumption~\ref{as:essmooth} hold and Proposition~\ref{prop:gaussian}
is a consequence of Theorem~\ref{th:main}.
\end{proof}

\section{Proof of Theorem~\ref{th:main}}
\label{sec:proof}

We now present the proof of Theorem~\ref{th:main}, which starts with the lower
bound.

\subsection{Proof of the lower bound}
It is standard for the following condition to hold
 to prove the lower bound: 
\[
\forall\, x \in\R, \quad \lim_{\delta\to 0}\ \underset{n\to +\infty}{\liminf}\
\frac{1}{n^\alpha}\log
\proba\big( S_n \in B(x,\delta) \big) \geq - I_\alpha(x),
\]
where $B(x,\delta)=(x-\delta, x+\delta)$.
By symmetry we can restrict ourselves to~$x>0$.
Let then $x,\delta,\eps >0$ be arbitrary and compute
\[
\begin{aligned}
\proba\big( S_n \in B(x,\delta) \big)
& = \proba\left(x-\delta\leq \frac{X_1}{n} + \frac 1n \sum_{i=2}^n X_i \leq x+\delta\right)
\\ & \geq  \proba\left(x-\delta -\eps\leq \frac{X_1}{n}  \leq x+\delta + \eps ,\
-\eps \leq \frac 1n \sum_{i=2}^n X_i\leq \eps \right)
\\ & = \proba\big( nx_- \leq X_1  \leq nx_+\big)
\,\proba\left( -\eps \leq \frac 1n \sum_{i=2}^n X_i\leq \eps \right).
\end{aligned}
\]
We introduced in the last line the notation $x_\pm = x \pm(\delta+\eps)$, which
we shall use again below. By the law of large numbers and because $\E[X]=0$ while~$X$
has a finite second moment,
the second probability in the last line converges to one, so
we can focus on the first probability: the one of fluctuation of one variable.

We revisit the exponential (Esscher) transform used in the
theorems of Cramér and Gartner--Ellis by modifying several of its
main features.
The plan of the proof below is as follows:
\begin{itemize}
\item Perform a subexponential transform on one variable;
\item Find the optimal tilting parameter and the rate function;
\item Derive concentration estimates to ensure that the tilted
  variable asymptotically has the correct mean with controlled variance
  (this last part is
  itself split in two steps).
\end{itemize}

\subsubsection*{Single variable Esscher transform at subexponential scale}
We start by tilting the variable~$X_1$ at a subexponential scale. Let~$\eta\in (0,\xi)$
be arbitrary and write
\[
\begin{aligned}
\proba\big( n x_- \leq X_1  \leq nx_+\big)
 & = \int_{nx_-\leq y\leq nx_+}\e^{-\eta \phi_\alpha(y) +
  \eta \phi_\alpha(y)}\mu(dy)
\\
& \geq  \e^{-\eta n^\alpha x_+^\alpha}
   \int_{n x_-\leq y\leq nx_+}\e^{
     \eta \phi_\alpha(y)}\mu(dy)
   \\ & = \e^{-\eta n^\alpha x_+^\alpha + \lambda_\alpha(\eta)}
   \proba\big(n x_-\leq\tilde
   X_\eta \leq n x_+ \big),
\end{aligned}
\]
where we used the scaled free energy~\eqref{eq:scaledFE} to introduce the
tilted random variable~$\tilde X_\eta$ with law
\begin{equation}
  \label{eq:mutilde}
\tilde \mu_\eta(dy) =
\e^{\eta\phi_\alpha(y) - \lambda_\alpha(\eta)}\mu(dy).
\end{equation}
We recall that~$\mu$ is the law of~$X$.
This resembles the standard tilting technique but on one variable, and with the
terms inside the exponential scaled by~$\phi_\alpha$.

By also applying the increasing function~$\phi_\alpha$ in the probability,
we thus reach
\begin{equation}
  \label{eq:steplowerbound}
\proba\big( n x_- \leq X_1  \leq nx_+\big)
\geq \e^{-\eta n^\alpha x_+^\alpha
  + \lambda_\alpha(\eta)}\,\proba\big(n^\alpha \phi_\alpha(x_-)\leq
   \phi_\alpha(\tilde X_\eta) \leq n^\alpha \phi_\alpha(x_+)\big).
\end{equation}
In the following, we use that, for~$\delta,\eps$ small enough, it holds
$x_->0$ and so $\phi_\alpha(x_-) = x_-^\alpha$.
We now have to choose a sequence~$\eta_n$
such that~$\eta_n\to \xi$ and the last probability has an appropriate lower bound.
In other words, we have to find the parameter~$\eta$ that makes the fluctuation~$x$
most likely for~$\tilde X_\eta$ at minimal entropic cost.

\subsubsection*{Optimal tilting}
It is natural in such a proof to look for a critical point of
$\e^{-\eta n^\alpha x^\alpha + \lambda_\alpha(\eta)}$. Heuristically,
such a critical point depends on~$n$ and should satisfy
\[
\eta_n \in \underset{\eta\in(-\xi,\xi)}{\mathrm{argmax}}\,\big\{ \eta (n x)^\alpha - \lambda_\alpha(\eta)\big\},
\]
which is a scaled Legendre--Fenchel transform.
Actually, by Assumption~\ref{as:essmooth}, the function~$\lambda_\alpha$ is differentiable
and its derivative is one-to-one from~$(-\xi,\xi)$ into~$\R$. Therefore, the unique
critical point within~$(-\xi,\xi)$ of the function inside brackets above is well-defined by:
\begin{equation}
  \label{eq:etan}
\eta_n = (\lambda_\alpha')^{-1}\big( (n x)^\alpha\big).
\end{equation}
Since~$nx\to+\infty$, we see that $\eta_n\to \xi$.
In order to make~\eqref{eq:etan} more explicit, we use a standard
dominated convergence theorem together with~\eqref{eq:mutilde} to obtain that
\[
\forall\, \eta\in(-\xi,\xi),\quad \lambda_\alpha'(\eta)=
 \frac{\E\left[\phi_\alpha(X)\, \e^{\eta \phi_\alpha(X)} \right]}
  {\E\left[\e^{\eta \phi_\alpha(X)} \right]}=\E\big[\phi_\alpha(\tilde X_{\eta})\big].
\]
 The choice~\eqref{eq:etan} thus ensures that
\[
\E\big[\phi_\alpha(\tilde X_{\eta_n})\big]=\lambda_\alpha'(\eta_n) = (n x)^\alpha.
\]
It is an intriguing feature that the tilting does not make the average
of the tilted variable~$\tilde X_{\eta_n}$ to be equal to~$nx$ but rather works with~$\phi_\alpha(\tilde X_{\eta_n})$,
which is why we write the last probability in~\eqref{eq:steplowerbound} in this way.
In this situation, since~$\eta_n\to \xi$ and $\lambda_\alpha \geq 0$ by symmetry, the lower
bound~\eqref{eq:steplowerbound} actually becomes
\begin{equation}
  \label{eq:nearlowerbound}
\underset{n\to+\infty}{\liminf}\ \frac{1}{n^\alpha}\log
\proba\big( n x_- \leq X_1  \leq nx_+\big)
\geq -\xi  x_+^\alpha
  + \underset{n\to+\infty}{\liminf}\ \frac{1}{n^\alpha}\log \proba\big(n^\alpha \phi_\alpha(x_-)\leq
   \phi_\alpha(\tilde X_{\eta_n}) \leq n^\alpha \phi_\alpha(x_+)\big).
\end{equation}
Thus the lower bound holds (by letting $\eps,\delta\to 0$) provided we can
control the last probability in the above inequality.

However, although the average of~$\phi_\alpha(\tilde X_{\eta_n})$ is correct, we cannot obtain a straightforward
control of $\proba\big(n^\alpha \phi_\alpha(x_-)\leq
\phi_\alpha(\tilde X_{\eta_n}) \leq n^\alpha \phi_\alpha(x_+)\big)$
 because there is only one variable, so using
 a law of large numbers as is usual for Cramér-like results is not an option.
 Applying the upper bound to control this term like in the Gartner--Ellis theorem
 also looks difficult (again because we do not manipulate an average).
 This is why we control the standard
 deviation of~$\phi_\alpha(\tilde X_{\eta_n})$ with explicit bounds through
 the last point of Assumption~\ref{as:essmooth}.

\subsubsection*{Concentration through second derivative}

For the sake of simplicity we introduce $\gamma>0$ defined by
\[
\gamma^\alpha = \min (x_+^\alpha - x^\alpha ,   x^\alpha - x_-^\alpha).
\]
This number  goes to zero as~$\eps+\delta$ goes to zero (we don't write
explicitly the dependency to avoid overloading notation).
We can thus write
\[
\begin{aligned}
\proba\big((nx_-)^\alpha\leq
\phi_\alpha(\tilde X_{\eta_n}) \leq (nx_+)^\alpha\big)
&\geq \proba\big( - ( n\gamma)^\alpha\leq
   \phi_\alpha(\tilde X_{\eta_n})- (nx)^\alpha \leq  (n\gamma)^\alpha  \big)
\\ & = \proba\left( \big|
   \phi_\alpha(\tilde X_{\eta_n})-\E[\phi(\tilde X_{\eta_n})]\big| \leq  ( n\gamma)^\alpha \right)
\\ & = 1 - \proba\left( \big|
   \phi_\alpha(\tilde X_{\eta_n})-\E[\phi(\tilde X_{\eta_n})]\big| >  ( n\gamma)^\alpha \right).
\end{aligned}
\]
Our goal is now to control the last probability\footnote{Markov's inequality
  implies that
  \[
\proba\left( \big|
\phi_\alpha(\tilde X_{\eta_n})-\E[\phi(\tilde X_{\eta_n})]\big| >  ( n\gamma)^\alpha \right)
\leq \frac{\E\left[\left(\phi_\alpha(\tilde X_{\eta_n})-\E[\phi(\tilde X_{\eta_n})] \right)^2\right]}
{( n\gamma)^{2\alpha}} = \left(\frac{x}{\gamma}\right)^{2\alpha}\frac{\lambda_\alpha''(\eta_n)}{\lambda_\alpha'(\eta_n)^2}
=\left(\frac{x}{\gamma}\right)^{2\alpha}\mathcal{V}(\eta_n).
\]
Therefore, if~$\mathcal{V}(\eta_n)\to0$ for any sequence $\eta_n\to\xi$, we reach the
desired result (recall that~$x$ is fixed and~$\gamma$ is small). However, this
assumption does not seem to be applicable in practical
cases, which is why we have to consider the weaker condition~\eqref{eq:Vsteep} and compute
more precise estimates at exponential scale.
}.
For this we rely on a (easily proved) symmetrized Tchebychev inequality: for
any random variable~$Z$ and $a,k>0$ it holds
\begin{equation}
\label{eq:maxproba}
  \proba ( |Z - \E[Z]| >a ) \leq \max\left( \E\left[\e^{k ( Z - \E[Z] - a)}\right],
\  \E\left[\e^{k ( -Z + \E[Z] + a)}\right] 
    \right).
\end{equation}
By symmetry we can consider one case only. Let us focus on the first one and
choose an arbitrary $k_n\in(0,\xi - \eta_n)$. Taking $Z= \phi_\alpha(\tilde X_{\eta_n})$ and
recalling that here $\E[Z] = (nx)^\alpha$ and $a=(n\gamma)^\alpha$, we have
\[
\E\left[\e^{k_n ( Z - \E[Z] - a)}\right] = \e^{-k_n n^\alpha( x^\alpha + \gamma^\alpha)}
\frac{\E\left[\e^{k_n \phi_\alpha( X)}\e^{\eta_n
      \phi_\alpha(X)}\right]}
       {\E\left[\e^{\eta_n \phi_\alpha( X)}\right]}
       =
       \e^{-k_n n^\alpha \tilde x_+^\alpha + \lambda_\alpha(k_n + \eta_n)
       - \lambda_\alpha( \eta_n)},
       \]
where we  introduced $\tilde x_+ = (x^\alpha + \gamma^\alpha)^{\frac1\alpha}>x$.
Since~$k_n\in(0,\xi - \eta_n)$, we set $y_n=k_n+\eta_n \in(\eta_n,\xi)$, which leads to
\[
-k_n (n\tilde x_+)^\alpha + \lambda_\alpha(k_n + \eta_n)
- \lambda_\alpha( \eta_n) = \eta_n  (n\tilde x_+)^\alpha - \lambda_\alpha(\eta_n)
- \big(y_n  (n\tilde x_+)^\alpha -\lambda_\alpha(y_n)\big). 
\]
When optimizing over $y_n \in(\eta_n,\xi)$,
the infimum of the above quantity is attained inside~$(\eta_n,\xi)$ (easily proved)
at the value
\[
y_n = \tilde \eta_n = (\lambda_\alpha')^{-1}\big( (n\tilde x_+)^\alpha\big).
\]
By steepness of~$\lambda_\alpha$ the above quantity is well-defined, and by
convexity (hence monotonicity of $\lambda_\alpha'$), the inequality $\tilde x_+> x$
implies that $\tilde \eta_n \geq \eta_n$.

Since the second case in~\eqref{eq:maxproba} is symmetric, we obtain
\[
\log \proba ( |Z - \E[Z]| >a ) \leq \lambda_\alpha(\tilde \eta_n)
+ ( \eta_n -\tilde \eta_n)\lambda_\alpha'(\tilde \eta_n) - \lambda_\alpha(\eta_n).
\]
We see that the above quantity looks like a Taylor expansion  of~$\lambda_\alpha$
at first order. We thus expand the cumulant function (backward) as follows:
there exists~$\bar \eta \in[\eta_n,\tilde \eta_n]$ such that
\[
\lambda_\alpha(\eta_n)= \lambda_\alpha(\tilde \eta_n + (\eta_n - \tilde \eta_n))
= \lambda_\alpha(\tilde \eta_n) + \lambda_\alpha'(\tilde \eta_n) (\eta_n - \tilde \eta_n)
+\frac12 \lambda_\alpha''( \bar \eta) (\eta_n - \tilde \eta_n)^2.
\]
As a result, since~$\lambda_\alpha''$ is increasing close enough to~$\xi$ and $\eta_n\leq \bar \eta$,
we have
\begin{equation}
  \label{eq:ineqlambdasec}
\log \proba ( |Z - \E[Z]| >a )\leq - \frac12 \lambda_\alpha''( \eta_n)
(\eta_n - \tilde \eta_n)^2.
\end{equation}
In order to control~\eqref{eq:ineqlambdasec}, we see at this stage a competition
between~$\lambda_\alpha''(\eta_n)$ that typically diverges to infinity
and $(\eta_n - \tilde \eta_n)^2$, which goes to zero. Let us derive the estimates on
this second term to reach the desired conclusion.

\subsubsection*{Convex analysis for variance control}

We first introduce the Legendre transform of~$\lambda_\alpha$:
\[
J_\alpha(x) = \sup_{\eta}\,\{ \eta x - \lambda_\alpha(\eta)\}.
\]
Standard convex analysis~\cite[Chapter~VI]{ellis2007entropy} shows that
\[
(\lambda_\alpha')^{-1} (\,\cdot\,)= J_\alpha'(\,\cdot\,).
\]
Therefore
\[
\tilde \eta_n - \eta_n = J_\alpha'\big( (n \tilde x_+)^\alpha\big) - J_\alpha'\big( (n x)^\alpha\big).
\]
We then perform another expansion but at order one: there is
$b_n \in [ (nx)^\alpha,  (n\tilde x_+)^\alpha]$ such that
\begin{equation}
  \label{eq:firstbb}
\tilde \eta_n - \eta_n = n^\alpha (\tilde x_+^\alpha - x^\alpha) J_\alpha''(b_n) \geq (n\gamma)^\alpha J_\alpha''(b_n) .
\end{equation}
We now use~\cite{crouzeix1977relationship}
to relate the second derivative of~$J_\alpha$ to the one of~$\lambda_\alpha$:
\begin{equation}
  \label{eq:secondbb}
J_\alpha''(b_n) = \frac{1}{\lambda_\alpha''\big( (\lambda_\alpha')^{-1}(b_n)\big)}
\geq \frac{1}{\lambda_\alpha''\big( (\lambda_\alpha')^{-1}\big( (n\tilde x_+)^\alpha\big)\big)}
= \frac{1}{\lambda_\alpha''( \tilde \eta_n)},
\end{equation}
where we used the monotonicity of~$\lambda_\alpha'$ and~$\lambda_\alpha''$
for~$n$ large enough to obtain the inequality above. 
Combining~\eqref{eq:firstbb} with~\eqref{eq:secondbb}, we can
turn~\eqref{eq:ineqlambdasec} into
\begin{equation}
  \label{eq:logratioint}
\log \proba ( |Z - \E[Z]| >a )\leq - \frac12 \lambda_\alpha''(\eta_n)\left(
\frac{(n\gamma)^\alpha}{\lambda_\alpha''(\tilde \eta_n)}
\right)^2.
\end{equation}
Introducing
\[
c_x =  \frac{ (x\gamma)^{2\alpha}}{2 ( \tilde x_+)^{4\alpha}}>0,
\]
then~\eqref{eq:logratioint} may be arranged as
(recalling that $(nx)^\alpha = \lambda_\alpha'(\eta_n)$
and similarly $(n\tilde x_+)^\alpha = \lambda_\alpha'(\tilde \eta_n)$):
\[
\begin{aligned}
\log \proba ( |Z - \E[Z]| >a )& \leq - c_x \frac{\lambda_\alpha''( \eta_n)}
     {(nx)^{2\alpha}}
     \left(
\frac{(n\tilde x_+)^{2\alpha}}{\lambda_\alpha''( \tilde \eta_n)}
\right)^2
 \\ & =- c_x \frac{\lambda_\alpha''(\eta_n)}
     {\lambda_\alpha'(\eta_n)^2}
     \left(
\frac{\lambda_\alpha'(\tilde \eta_n)^{2}}{\lambda_\alpha''( \tilde \eta_n)}
\right)^2
\\ & = - c_x \frac{\mathcal{V}(\eta_n)}{\mathcal{V}(\tilde \eta_n)^2}
  \leq - \frac{ c_x}{\mathcal{V}(\tilde \eta_n)}
\leq -\frac{c_x}{\omega},
\end{aligned}
\]
where, in
Assumption~\ref{as:essmooth}, we used monotonicity of~$\mathcal{V}$ to get
$-\mathcal{V}(\eta_n)\leq-\mathcal{V}(\tilde \eta_n)$
 for~$n$ large enough, as well as~\eqref{eq:Vsteep} for the last inequality.
 Note that when $\mathcal{V}(\tilde \eta_n)\to 0$, the probability of
 deviating from the mean converges to zero, but in general it is just smaller than one.
 We thus obtain by~\eqref{eq:maxproba} that there exists~$c_{x,\omega}>0$ (depending also
 on the fixed parameters~$\eps,\delta>0$) such that, for~$n$ large enough, it holds
\[
\proba\big(n^\alpha \phi_\alpha(x_-)\leq
\phi_\alpha(\tilde X_{\eta_n}) \leq n^\alpha \phi_\alpha(x_+)\big)\geq c_{x,\omega}.
\]
Plugging this estimate in~\eqref{eq:nearlowerbound}
allows to conclude the proof of the lower bound.

\subsection{Upper bound}
\label{sec:upper}
We now turn to the upper bound, for which it is sufficient~\cite[Theorem~2.2.3]{dembo2010large}
to study the
probability~$\proba\left( S_n \geq x\right)$ for $x>0$. We thus fix $x >0$, and first write
\begin{equation}
  \label{eq:upperseparation}
\proba\left( S_n \geq x\right) \leq
\proba\left( \max_{1\leq i\leq n} X_i \geq nx\right)
+ \proba\left(\max_{1\leq i\leq n} X_i < nx,\ \frac 1n \sum_{i=1}^n X_i \geq x\right)
= A_n^1 + A_n^2.
\end{equation}
We recall that, from~\cite[Lemma~1.2.15]{dembo2010large}, we have
\begin{equation}
  \label{eq:maxloglim}
\underset{n\to +\infty}{\limsup}\  \frac{1}{n^\alpha} \log (A_{n}^1 +A_{n}^2)
= \max \left( \underset{n\to +\infty}{\limsup}\  \frac{1}{n^\alpha}\log A_{n}^1,
\, \underset{n\to +\infty}{\limsup}\ \frac{1}{n^\alpha}\log A_{n}^2
\right).
\end{equation}
We can therefore study the sequences~$A_n^1$ and~$A_n^2$ separately and
take the maximum of the two when going at logarithmic scale. We closely follow
the path of~\cite{gantert2014large} by generalizing some elements along
Assumption~\ref{as:essmooth}.

\subsubsection*{Large deviations for the heavy tail term~$A_n^1$.}
For the first term we use the union's bound together with
Tchebychev's inequality at subexponential scale to obtain, for any $\eta\in(0,\xi)$:
\[
\proba\left( \max_{1\leq i\leq n} X_i \geq nx\right)
\leq n \proba\left( X_1 \geq nx\right)
\leq n \e^{- \eta (n x)^\alpha}\,\e^{ \lambda_\alpha(\eta)}.
\]
Therefore,
\[
 \frac {1}{n^\alpha}\log\proba\left( \max_{1\leq i\leq n} X_i \geq nx\right)
\leq -\eta |x|^\alpha+ \frac{\lambda_\alpha(\eta)}{n^\alpha} + \frac{\log(n)}{n^\alpha}.
\]
Since $\eta\in (0,\xi)$ is fixed, Assumption~\ref{as:essmooth} implies that
$\lambda_\alpha(\eta)<+\infty$, so
\[
\underset{n\to +\infty}{\limsup}\
 \frac {1}{n^\alpha}\log\proba\left( \max_{1\leq i\leq n} X_i \geq nx\right)
\leq -\eta |x|^\alpha.
\]
We now can pass to the limit $\eta\to\xi$ to obtain that
\[
\underset{n\to +\infty}{\limsup}\
\frac {1}{n^\alpha}\log A_n^1
\leq -\xi |x|^\alpha.
\]

\subsubsection*{Controlling the light tail term~$A_n^2$.}
Let us turn to the second term in~\eqref{eq:upperseparation}. The idea now is to
use Tchebychev's inequality at exponential scale but with a
parameter~$\beta_n>0$ depending on~$n$:
\[
A_n^2 \leq \e^{-\beta_n x}
\E\left[ \ind_{\left\{\overset{n}{\underset{i=1}{\max}}\,
    X_i < nx \right\}}\e^{\frac{\beta_n}{n}\sum_{i=1}^n X_i }
  \right]
\leq \e^{-\beta_n x}\prod_ {i=1}^n 
\E\left[ \ind_{\{ X_i < nx \}}\e^{\frac{\beta_n}{n} X_i }
  \right].
\]
It is natural to choose $\beta_n = n^\alpha \theta$ for some $\theta> 0$,
since we then obtain
\begin{equation}
  \label{eq:tchebn}
\frac{1}{n^\alpha} \log A_n^2 \leq - \theta x + n^{1-\alpha}
 \log \E\left[\ind_{\{ X_1 < nx \}} \e^{\theta n^{\alpha-1}X_1 
}  \right].
\end{equation}
It is tempting to set $\theta = \xi x^{\alpha - 1}$, however this will be a limit
case. We actually need a precise control of the remaining term, in which the bound on~$X_i<nx$
going to infinity comes in competition with the factor~$\theta n^{\alpha -1}$ inside
the exponential, which goes to zero. Following~\cite{gantert2014large}, we then prove
the following lemma.

\begin{lemma}
  \label{lem:boundremainder}
  For any $\theta < \xi x^{\alpha -1}$ it holds
  \[
  \underset{n\to +\infty}{\limsup}\ n^{1-\alpha}
 \log \E\left[\ind_{\{ X_1 < nx \}} \e^{\theta n^{\alpha-1}X_1}  \right] \leq 0.
\]  
\end{lemma}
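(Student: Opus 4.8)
The plan is to prove the much stronger statement
$\E\!\left[\ind_{\{X_1<nx\}}\e^{\theta n^{\alpha-1}X_1}\right]=1+\mathrm{o}(n^{\alpha-1})$,
after which the lemma follows at once since $\log(1+u)\le u$ gives
$n^{1-\alpha}\log E_n\le n^{1-\alpha}\,\mathrm{o}(n^{\alpha-1})\to 0$, where I write $E_n$ for the expectation. We may take $\theta>0$, the regime arising in~\eqref{eq:tchebn}, and set $\beta_n=\theta n^{\alpha-1}\to 0$. The key difficulty to flag first is that the \emph{naive} bound is useless: on $0\le y\le nx$ one has $\theta n^{\alpha-1}y\le\eta_0\phi_\alpha(y)$ with $\eta_0=\theta x^{1-\alpha}$, and $\eta_0<\xi$ holds \emph{precisely because} $\theta<\xi x^{\alpha-1}$, so integrating yields only $E_n\le 1+\e^{\lambda_\alpha(\eta_0)}$, a finite \emph{constant}. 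Multiplying the logarithm of a constant larger than one by $n^{1-\alpha}\to+\infty$ diverges, so the whole point is to extract the exact leading term $1$ and show the remainder is $\mathrm{o}(n^{\alpha-1})$. I will do this by a bulk/tail split.

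I would introduce the threshold $A_n=n^{1-\alpha}$, tuned so that $\beta_n A_n=\theta$ stays bounded while $A_n\to+\infty$ and $A_n/n\to 0$ (hence $A_n<nx$ eventually), and decompose $E_n=E_n^{(1)}+E_n^{(2)}+E_n^{(3)}$ over the events $\{X_1\le 0\}$, $\{0<X_1\le A_n\}$, and $\{A_n<X_1<nx\}$.

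\emph{Bulk.} On $\{X_1\le 0\}$ I use $\e^{u}\le 1+u+\tfrac12 u^2$ (valid for $u\le 0$), and on $\{0<X_1\le A_n\}$ I use $\e^{u}\le 1+u+\tfrac12 u^2\e^{\theta}$, legitimate there since $u=\beta_n X_1\le\beta_n A_n=\theta$. Integrating and invoking Assumption~\ref{as:basics} (finite first and second moments with $\E[X]=0$), the zeroth-order contributions sum to $\proba(X_1\le A_n)\le 1$, the first-order ones to $\beta_n\E[X_1\ind_{\{X_1\le A_n\}}]=-\beta_n\E[X_1\ind_{\{X_1> A_n\}}]\le 0$, and the second-order ones are bounded by $C n^{2(\alpha-1)}$ with $C=\tfrac12\theta^2\e^{\theta}\E[X^2]$. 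Thus $E_n^{(1)}+E_n^{(2)}\le 1+C n^{2(\alpha-1)}$.

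\emph{Tail and conclusion.} Fix $\eta\in(\eta_0,\xi)$, possible exactly because $\theta x^{1-\alpha}<\xi$. On $\{A_n<X_1<nx\}$ I factor $\e^{\beta_n y}=\e^{h(y)}\e^{\eta\phi_\alpha(y)}$ with $h(y)=\beta_n y-\eta y^{\alpha}$, so that $E_n^{(3)}\le\big(\sup_{[A_n,nx]}\e^{h}\big)\,\E[\e^{\eta\phi_\alpha(X)}]=\big(\sup_{[A_n,nx]}\e^{h}\big)\,\e^{\lambda_\alpha(\eta)}$, the last factor being finite since $\eta<\xi$. As $h''(y)=\eta\alpha(1-\alpha)y^{\alpha-2}>0$, the function $h$ is convex and its supremum is attained at an endpoint; one computes $h(nx)=(\theta x-\eta x^{\alpha})n^{\alpha}=x^{\alpha}(\eta_0-\eta)n^{\alpha}$, negative by the choice $\eta>\eta_0$, and $h(A_n)=\theta-\eta n^{\alpha(1-\alpha)}\to-\infty$. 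Since $\alpha(1-\alpha)<\alpha$, the maximum is $h(A_n)$, giving $E_n^{(3)}\le\e^{\lambda_\alpha(\eta)+\theta}\e^{-\eta n^{\alpha(1-\alpha)}}$, which is stretched-exponentially small, hence $\mathrm{o}(n^{2(\alpha-1)})$. Collecting the three pieces yields $E_n\le 1+C n^{2(\alpha-1)}+\mathrm{o}(n^{2(\alpha-1)})$, so $n^{1-\alpha}\log E_n\le C n^{\alpha-1}+\mathrm{o}(1)\to 0$. The main obstacle is the opening observation: a constant bound is worthless here, so one must isolate the exact value $1$, and this forces the delicate split together with the tuning of $A_n$ so that the Taylor remainder and the boundary term $h(nx)$ are controlled simultaneously—the latter being negative if and only if $\theta<\xi x^{\alpha-1}$, which is exactly where the hypothesis is used.
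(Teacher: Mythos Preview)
Your proof is correct. Both you and the paper start from $\log y\le y-1$, reducing the task to showing $n^{1-\alpha}(E_n-1)\to 0$, but the two arguments diverge from there. The paper performs a \emph{global} Taylor expansion of $\e^y-1$ to order~$k$ (with $k>\alpha/(1-\alpha)$), controls the intermediate terms via the finite moments of all orders, and then treats the order-$(k+1)$ remainder by H\"older's inequality together with the integration-by-parts identity of Lemma~\ref{lem:IBP} and a subexponential Tchebychev bound; the condition $\theta<\xi x^{\alpha-1}$ enters when forcing the auxiliary function $g(y)=q\theta xy-\eta x^\alpha y^\alpha$ in~\eqref{eq:gz} to be nonpositive on $[0,1]$. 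Your route is different and more elementary: the bulk/tail split at $A_n=n^{1-\alpha}$ lets a \emph{second}-order Taylor bound suffice on the bulk (so only $\E[X^2]<\infty$ is used, not all polynomial moments), while on the tail the pointwise factorisation $\e^{\beta_n y}=\e^{h(y)}\e^{\eta\phi_\alpha(y)}$ with $h$ convex gives a stretched-exponential bound directly, bypassing both H\"older and Lemma~\ref{lem:IBP}. The hypothesis $\theta<\xi x^{\alpha-1}$ is used at exactly the analogous spot, namely to make $h(nx)<0$. What you gain is economy of moment assumptions and of auxiliary lemmas; what the paper's approach gains is a more mechanical structure once $k$ is fixed. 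One cosmetic point: you announce the two-sided estimate $E_n=1+\mathrm{o}(n^{\alpha-1})$ but only prove (and only need) the upper bound $E_n\le 1+\mathrm{O}(n^{2(\alpha-1)})$.
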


If we prove Lemma~\ref{lem:boundremainder} then we can take the limit
$\theta\to \xi x^{\alpha-1}$ in~\eqref{eq:tchebn} and~\eqref{eq:maxloglim} allows to
conclude the proof of the upper bound, and therefore the one of Theorem~\ref{th:main}.

\subsubsection*{Proof of Lemma~\ref{lem:boundremainder}.}
We follow the strategy of~\cite{gantert2014large}
by first noting that $\log y \leq y - 1$ for any $y>0$. Noting that the
exponential is increasing and using a Taylor expansion, we also get
$\e^y - 1 \leq y + y^2/2+....+\e^y y^{k+1}/(k+1)!$
where for now~$k$ is an arbitrary large integer. We thus obtain
\begin{equation}
  \label{eq:intbound}
n^{1-\alpha}  \log \E\left[\ind_{\{ X_1 < nx \}} \e^{\theta n^{\alpha-1}X_1}  \right] 
 \leq
n^{1-\alpha}\sum_{j=1}^k
\E\left[ \frac{(\theta n^{\alpha - 1} X_1)^j}{j!}\ind_{\{ X_1 < nx \}}\right] + \frac{R_n}{(k+1)!},
\end{equation}
where
\[
R_n = n^{1-\alpha}  (\theta n^{\alpha-1})^{k+1}
\E\left[ X_1^{k+1} \ind_{\{ X_1 < nx \}} \e^{\theta n^{\alpha - 1} X_1}
  \right].
\]
For the sum on the right hand side of~\eqref{eq:intbound}, the term
for $j=1$ is equal to zero because $\E[X_1]=0$. For $j>1$ each term is bounded by
\[
n^{1-\alpha}\E\left[|X_1|^j\right] (n^{\alpha-1})^j = n^{(\alpha - 1)(j-1)}\E\left[|X_1|^j\right],
\]
which goes to zero as $n\to + \infty$ since~$X_1$ has finite moments of any order
by Assumption~\ref{as:basics}.

It thus only remains to show that $\limsup\, R_n \leq 0$.
For this we use Holder's inequality for some $p,q>1$ with $1/p + 1/q = 1$
to separate the exponential and polynomial moment parts:
\begin{equation}
  \label{eq:Rnseparate}
R_n\leq n (\theta n^{\alpha-1})^{k+1} 
\E\left[ |X_1|^{(k+1)p} \ind_{\{ X_1 < nx \}}
  \right]^{1/p}\left(\frac{1}{n^\alpha}
\E\left[  \ind_{\{ X_1 < nx \}} \e^{q \theta n^{\alpha-1} X_1}
  \right]^{1/q}\right).
\end{equation}
For the first term we have
\begin{equation}
  \label{eq:Rnfirstterm}
n (\theta n^{\alpha-1})^{k+1} 
\E\left[ |X_1|^{(k+1)p} \ind_{\{ X_1 < nx \}}
  \right]^{1/p}\leq \theta^{k+1} n^{(\alpha-1)(k+1) +1}
\E\left[ |X_1|^{(k+1)p}   \right]^{1/p},
\end{equation}
which goes to zero for any $p> 1$ as soon as~$k$ is large enough for the following condition
to hold:
\begin{equation}
  \label{eq:condk}
  \alpha < \frac{k}{k+1}.
\end{equation}
Since $\alpha <1$, we can then choose~$k$ such that
\[
k > \frac{\alpha}{1-\alpha},
\]
in which case~\eqref{eq:condk} is satisfied and the right hand side
of~\eqref{eq:Rnfirstterm} goes to zero for any $p>1$.

The last step is to prove that there is some $q>1$ such that
the second term on the right hand side of~\eqref{eq:Rnseparate} satisfies
\begin{equation}
  \label{eq:lastboundupper}
  \underset{n\to +\infty}{\limsup}\
 \frac{1}{n^\alpha}
\E\left[  \ind_{\{ X_1 < nx \}} \e^{q \theta n^{\alpha - 1} X_1}
  \right]^{1/q} < +\infty.
\end{equation}
Lemma~\ref{lem:IBP} in Appendix~\ref{sec:appendix} implies that
(since the first boundary term is zero and the second is negative):
\[
\E\left[\ind_{\{ X_1 < nx \}} \e^{q \theta n^{\alpha - 1} X_1}
  \right] \leq q \theta n^{\alpha - 1}
\int_{-\infty}^{nx}
\e^{q\theta n^{\alpha - 1} z}\, \proba(X_1 \geq z)\,dz.
\]
We have
\[
\begin{aligned}
\int_{-\infty}^{nx}
\e^{q\theta n^{\alpha - 1} z}\, \proba(X_1 \geq z)\,dz & = \int_{-\infty}^{0}
\e^{q\theta n^{\alpha - 1} z}\, \proba(X_1 \geq z)\,dz
+ \int_{0}^{nx}
\e^{q\theta n^{\alpha - 1} z}\, \proba(X_1 \geq z)\,dz\\ & \leq
C_- + \int_{0}^{nx}
\e^{q\theta n^{\alpha - 1} z}\, \proba(X_1 \geq z)\,dz,
\end{aligned}
\]
where
\[
C_- = \int_{-\infty}^{0}
\e^{q\theta z}\,\,dz < +\infty.
\]
We therefore focus on the behavior of the integral on~$[0,nx]$ as $n\to+\infty$.
Using again Tchebychev's inequality at subexponential scale for some~$\eta\in(0, \xi)$ together with
the change of variable $z = nx y$ (recall $x>0$ is fixed) we have
\begin{equation}
  \label{eq:tchebz}
\int_{0}^{nx}
\e^{q\theta  n^{\alpha - 1}z}\, \proba(X_1 \geq z)\,dz \leq \int_{0}^{nx}
\e^{q\theta  n^{\alpha - 1}z - \eta z^\alpha + \lambda_\alpha(\eta)} \,dz = nx
\, \e^{ \lambda_\alpha(\eta)} \int_{0}^{1}
\e^{n^\alpha g(y)}\,dy,
\end{equation}
where we introduced the function~$g$ defined by
\begin{equation}
  \label{eq:gz}
  \forall\, y \in[0,1],\quad
  g(y) = q\theta x y - \eta x^\alpha y^{\alpha}.
\end{equation}
Recall that for now $\theta < \xi x^{\alpha - 1}$ is fixed. Therefore, for~$\eps>0$ small
enough, it holds
\begin{equation}
  \label{eq:ineqtheta}
  \theta x < (1- \eps)^2\xi x^{\alpha }.
\end{equation}
We can then choose~$q=1/(1-\eps)>1$ and~$\eta=(1-\eps)\xi<\xi$.
In this case~\eqref{eq:ineqtheta} becomes
\[
q\theta x < \eta x^{\alpha}.
\]
The above condition implies in particular that
\[
  \forall\, y \in [0, 1], \quad g(z)\leq 0,
\]
so
\[
\forall\, n \geq 1,\quad
\int_{0}^{1} \e^{n^\alpha g(y)}\,dy\leq 1.
\]
Finally, we gather the above estimates to reach
\[
\begin{aligned}
\frac{1}{n^\alpha}
\E\left[  \ind_{\{ X_1 < nx \}} \e^{q \frac{\beta_n}{n} X_1}
  \right]^{1/q} & \leq \frac{1}{n^\alpha}\left(
\frac{\theta n^{\alpha-1} }{1-\eps}
\left( C_- + n x\,\e^{ \lambda_\alpha(\eta)} \right)\right)^{1-\eps}\\
 & = \frac{1}{n^{\eps \alpha}}\left(
\frac{\theta}{1-\eps} \left( \frac{C_-}{n} + x\,\e^{ \lambda_\alpha(\eta)} \right)\right)^{1-\eps}
\xrightarrow[n\to+\infty]{}0.
\end{aligned}
\]
This shows that~\eqref{eq:lastboundupper} is satisfied, so
Lemma~\ref{lem:boundremainder} holds and the theorem is proved.

\section{Discussion}
\label{sec:discussion}
In this work we investigated large deviations principles for empirical averages of
i.i.d. subexponential random variables. Under a subexponential moment condition, we
showed that a LDP holds at a subexponential time scale with an explicit non convex rate function,
expressed through a tail coefficient.
This result generalizes earlier works by providing a full LDP that includes a lower bound,
and by avoiding assumptions on the cumulative distribution of the variable.

In this subexponential regime, the rate function is always singular at
zero, as it does not even admit a first derivative. Although we could expect non-existence
of a second derivative (because this would contradict the central limit theorem),
the phase transition from the standard exponential regime is very abrupt. Indeed, we illustrated in
Section~\ref{sec:examples} that a smooth rate function can become non-differentiable
by raising the underlying random variable to a power arbitrarily close to one. Moreover, in this new regime, the rate function does not depend
on the full probability distribution of the random variable but only on its coefficient in
the tail, so it independent of the light tail part of the distribution.
On the contrary the rate of decay in the number of variables is distribution specific,
while it is universally exponential for variables with exponential moments (although they may have
very different tails, like Gaussian and exponential variables).

Concerning the proof, although the upper bound part (which develops the techniques
used in~\cite{gantert2014large}) is interesting and helps understanding the problem, the most
original part of the paper is the proof of the lower bound. For this  we design
a new tilting strategy, which requires an assumption on the second derivative on the
free energy. This condition has the attractive interpretation of a control on the relative variance
of the unique tilted random variable.

An exciting outcome of this proof is to provide the optimal sampling strategy for
numerically estimating large deviations probabilities of subexponential variables.
It is generally believed that
a good tilting scheme in the heavy tail scenario is to replace~$X_1$ by $X_1 + nx$. We
show on the contrary that the optimal scheme is to replace~$X_1$ by the
variable~$\tilde X_{\eta_n}$ defined by~\eqref{eq:mutilde}-\eqref{eq:etan}.
As is usual for this kind of tilting, the optimal value~$\eta_n$ depends on
the inverse of the derivative of the free energy. However, in the exponential scaling
case, it does not depend on~$n$ and is applied to all variables. Here,  one variable only
is tilted with a parameter that depends on the full sample size.

Finally, our initial motivation to replace assumptions on the cumulative distribution by
a moment condition was to move forward to correlated systems instead of independent variables.
In particular, if
one studies empirical averages of stochastic differential equations in the long time limit,
it is hard to define a cumulative distribution to make an assumption on.
By harvesting the idea proposed in~\cite{bazhba2022large}, the author managed to
propose a simple extension of the present paper to the Ornstein--Uhlenbeck process raised
to an arbitrary power~\cite{ferre2024heavy}. We consider this as a first stone to complete~\cite{ferre2019large}
and propose a full understanding of fluctuations of time averages of SDEs.

\subsection*{Acknowledgments}
The author is particularly grateful towards Alain Rouault and Djalil Chafaï for
their advise to read papers written by Nina Gantert and collaborators.
The reference~\cite{gantert2014large}
was actually the most insightful I could find on this issue, and it was decisive
for my understanding of the upper bound.

\appendix
\section{A couple of technical results}
\label{sec:appendix}

We recall the definition of a large deviations principle.
\begin{definition}
  \label{def:LDP}
  A sequence of random variables~$(Z_n)_{n\geq 1}$ taking values in a topological
  space~$\cZ$ equipped
  with its Borel $\sigma$-field satisfies a large deviations principle at speed~$v_n$
  and with rate function~$I:\cZ\to[0,+\infty]$ if~$I$ is lower semicontinuous and for
  any measurable set~$B\subset \cZ$ it holds
  \[
  -\inf_{\mathring{B}}\, I \leq \underset{n\to+\infty}{\liminf}\ \frac{1}{v_n}\log
  \proba\big(Z_n\in B\big)
  \leq \underset{n\to+\infty}{\limsup}\, \frac{1}{v_n}\log
  \proba \big(Z_n\in B\big)\leq - \inf_{\overline{B}}\, I,
  \]
  where~$\mathring{B}$ and~$\overline{B}$ denote respectively the interior and the closure
  of~$B$ for the topology of~$\cZ$.
  Moreover we say that~$I$ is a good rate function if it has compact level sets, and
   that~$I$ is trivial if it is equal to~$0$ everywhere or equal to~$+\infty$
  everywhere except at~$\E[Z]$.
\end{definition}

We regularly use Tchebychev type inequalities in the paper. In our terminology,
the inequality at exponential scale is:
\[
\forall\, z,\eta \geq 0, \quad \proba (X\geq z)\leq \e^{-\eta z}\E\left[ \e^{\eta X}\right].
\]
In order to prove our results, we also need to scale the various quantities at hand
by~$\phi_\alpha$. We can then use Tchebychev's inequality at \emph{subexponential scale},
which reads
\[
\forall\, z,\eta \geq 0, \quad \proba (X\geq z)\leq \e^{-\eta z^\alpha}
\E\left[ \e^{\eta \phi_\alpha(X)}\right].
\]
It is a simple corollary of the first inequality.

For the proof of the upper bound, we recall the following useful integration
by part formula~\cite[Lemma~5]{gantert2014large}.
\begin{lemma}
  \label{lem:IBP}
  For any real-valued random variable~$X$ on a probability space~$(\Omega,\mathcal{F},\proba)$,
  for any $a>0$ and real numbers $r_1<r_2$, it holds
  \[
  \E\left[ \e^{a X}\ind_{\{r_1\leq X\leq r_2\}}
    \right]
  =
  a \int_{r1}^{r2} \e^{a z} \proba(X\geq z)\,dz
  +\e^{a r_1} \proba(X\geq r_1)
  -\e^{a r_2} \proba(X\geq r_2).
  \]
\end{lemma}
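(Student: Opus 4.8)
The plan is to prove the identity by a Fubini--Tonelli (layer-cake) argument, which is the measure-theoretic form of integration by parts and requires no regularity of the law of~$X$. The starting observation is the elementary representation of the exponential as the integral of its own derivative from the left endpoint: for every $y\in[r_1,r_2]$,
\[
\e^{ay}=\e^{ar_1}+a\int_{r_1}^{r_2}\e^{az}\,\ind_{\{z\le y\}}\,\d z,
\]
which is just the fundamental theorem of calculus rewritten with an indicator so that the range of integration no longer depends on~$y$.

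First I would substitute $y=X$, multiply through by $\ind_{\{r_1\le X\le r_2\}}$, and take expectations. Since $a>0$ the resulting integrand is nonnegative, so Tonelli's theorem justifies exchanging the expectation with the $\d z$-integral, leaving
\[
\E\!\left[\e^{aX}\ind_{\{r_1\le X\le r_2\}}\right]=\e^{ar_1}\,\proba(r_1\le X\le r_2)+a\int_{r_1}^{r_2}\e^{az}\,\proba(z\le X\le r_2)\,\d z,
\]
where I used that $\{z\le X\}\cap\{r_1\le X\le r_2\}=\{z\le X\le r_2\}$ for $z\in[r_1,r_2]$. This interchange is the one structural step of the argument.

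It then remains to rewrite the two probabilities in terms of the tail function $\proba(X\ge\,\cdot\,)$ appearing in the statement and to collect the boundary terms. I would use $\proba(z\le X\le r_2)=\proba(X\ge z)-\proba(X> r_2)$ and $\proba(r_1\le X\le r_2)=\proba(X\ge r_1)-\proba(X> r_2)$, together with $a\int_{r_1}^{r_2}\e^{az}\,\d z=\e^{ar_2}-\e^{ar_1}$ to evaluate the terms carrying the constant factor $\proba(X>r_2)$. The two $\e^{ar_1}\proba(X>r_2)$ contributions then cancel, producing exactly the claimed formula.

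I expect no genuine analytic obstacle: the proof is essentially bookkeeping around the endpoints, with the only interchange of limits justified by nonnegativity of the integrand. The single point to watch is the boundary term at~$r_2$, where the computation naturally yields $\proba(X>r_2)$; this coincides with the stated $\proba(X\ge r_2)$ whenever $\proba(X=r_2)=0$, in particular for the density-carrying variables to which the lemma is later applied, and in any case the application only invokes the inequality obtained by discarding this nonpositive term.
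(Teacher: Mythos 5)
Your proof is correct, and there is no in-house argument to compare it against: the paper does not prove Lemma~\ref{lem:IBP} but imports it from~\cite[Lemma~5]{gantert2014large}, where it is established by essentially the same Fubini/Tonelli (layer-cake) computation you carry out. So your write-up supplies the proof behind the citation rather than deviating from one; every step checks out, including the interchange (nonnegative integrand, so Tonelli applies) and the identification $\{z\le X\}\cap\{r_1\le X\le r_2\}=\{z\le X\le r_2\}$ for $z\in[r_1,r_2]$.

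Your endpoint remark deserves emphasis, because it is not mere bookkeeping: the exact identity that your computation (or any correct one) produces carries $\proba(X>r_2)$ in the second boundary term, and with $\proba(X\ge r_2)$, as the lemma is stated here, equality genuinely fails whenever $\proba(X=r_2)>0$. Indeed, for $X\equiv r_2$ the left-hand side equals $\e^{a r_2}$, while the stated right-hand side is $a\int_{r_1}^{r_2}\e^{az}\,dz+\e^{a r_1}-\e^{a r_2}=0$; in general the stated right-hand side undershoots by $\e^{a r_2}\,\proba(X=r_2)$. Two caveats on your closing sentence, though. First, the paper's standing hypotheses (Assumption~\ref{as:basics}) do not give $X$ a density, so you cannot lean on ``density-carrying variables''; your fallback is the right one: in Section~\ref{sec:upper} the lemma is invoked only through the inequality obtained by discarding the nonpositive $r_2$-boundary term and letting $r_1\to-\infty$ (which kills the other boundary term since $\e^{a r_1}\proba(X\ge r_1)\le \e^{a r_1}\to 0$), and that inequality is valid under either convention. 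Second, the application actually features $\ind_{\{X_1<nx\}}$, open at the right endpoint, which is dominated by the closed-interval indicator of the lemma, so the same monotonicity observation disposes of that small mismatch as well.
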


\bibliographystyle{abbrv}

\end{document}